\definecolor{rojo}{rgb}{1,0,0}
\definecolor{abelian}{cmyk}{0.50,0,1,.4}
\definecolor{noabelian}{cmyk}{0.94,0.54,0,0}
\definecolor{rojo}{cmyk}{0,1,1,0}
\definecolor{verde}{cmyk}{0.91,0,0.88,0.12}
\newcommand{\Abe}{\textcolor{abelian}}
\newcommand{\noAbe}{\textcolor{noabelian}}
\newtheorem{thm}{Theorem}
\newtheorem{cor}[thm]{Corollary}
\newtheorem{lem}[thm]{Lemma}
\newtheorem{prop}[thm]{Proposition}
\theoremstyle{definition}
\newtheorem{defn}[thm]{\textbf{Definition}}
\theoremstyle{definition}
\theoremstyle{remark}
\newcommand{\op}{\operatorname}
\newcommand{\ben}{\begin{equation}}
\newcommand{\een}{\end{equation}}
\newcommand{\bena}{\begin{equation*}}
\newcommand{\eena}{\end{equation*}}
\newcommand{\ma}{\mathcal}
\newcommand{\ZZ}{\mathbb{Z}}
\def\ZZ{\mathbb{Z}}
\title{An approximation for the number of subgroups}
\author{Bruno Cisneros\thanks{IMUNAM-Oaxaca} and Carlos Segovia\thanks{IMUNAM-Oaxaca}}
\begin{document}
\maketitle

\begin{abstract}
Previously the second author has constructed by cobordism methods, an invariant associated to a finite group $G$. This invariant approximates the number of subgroups of a group, giving in some cases the number of abelian and cyclic subgroups. Here we explain the formulas used to obtain this invariant and we present values for some families of groups. 
\end{abstract}

\section*{Introduction}

An unsolved problem in geometric group theory is to give an explicit formula for the number of subgroups of a (finite) non-abelian group, even for abelian groups, this is a complicated task \cite{tarna,cald}. There are families of groups which admit a nice description of this number. For example the number of divisors of the integer $n$, denoted by $\tau(n)$, is the number of subgroups of the cyclic group $\ZZ_n$. For the Dihedral group $D_{2n}$, Stephan A. Cavior in \cite{cavior}, proved that the number of subgroups is given by $\tau(n)+\sigma(n)$, where $\sigma(n)$ is the sum of the divisors. Similarly, for the dicyclic groups $Dic_n$, the number of subgroups of $Dic_n$ coincides with $\tau(2n)+\sigma(n)$. For all the groups of order less than thirty, a list of the groups and the number of their subgroups is presented by G. A. Miller in \cite{Miller}.

    Hereafter comprise the results obtained with the help of the computer algebra system SageMath \cite{sage} and the group theory software \cite{gap}. 
We present a table containing for each group $G$ the number of subgroups, the number of abelian subgroups and the number $r(G)$ associated. 
The number $r(G)$ is written as a finite sum $r_1(G)+r_2(G)+\cdots$ and we report the first summand $r_1(G)$. 
We will prove in this work some important properties for the invariant $r(G)$.
 Indeed, for $G$ an abelian group, we obtain $r_i(G)=0$ for $i\geq 2$ and $r_1(G)$ has an explicit form as the cardinality of the quotient of $\{(k,g):k,g\in G\textrm{ and }[k,g]=1\}$ by an action of the special linear group $\op{SL}(2,\ZZ)$, which is partially contained in \cite{Kaufmann1}. For the cyclic group we obtain $r(\ZZ_n)=\tau(n)$ which is the number of divisors of $n$ and for $G$ the dihedral $D_{2n}$ and the dicyclic $Dic_n$ the number $r_1(G)$ is the number of abelian subgroups. For torsion groups $\ZZ_{p^n}$, the number is $r(\ZZ_{p^n})=\frac{p^{2n-1+p^{n+1}-p^{n-1}+p^2+p-1}}{p^2-1}$. For example for $p=2$ this is the sequence A007581 in \cite{oeis}.

\newpage
\bena\label{tabla} \textrm{LIST OF THE GROUPS AND THE NUMBER OF THEIR SUBGROUPS} \eena

\begin{tabbing}
  ORDs \=  DESCRIPTION OF THE GR\= UBGROUPS  \= lLIANGRPSxxx   \=  SAGExxxxx  \= SAGExx \= SAGE \kill
  \tiny{ORDERS} \> \tiny{DESCRIPTION OF THE GROUPS} \> \tiny{SUBGROUPS} \>  \tiny{ABE-SUBGROUPS} \>  \tiny{$r_1(G)$}   \\
  4 \> \Abe{Cyclic($\ZZ_4$)}, \Abe{$\ZZ_2^2$} \> 3,5 \>  3,5 \>       3,5 \\
  6 \> \Abe{$\ZZ_6$}, \noAbe{symmetric($\Sigma_3$)} \> 4,6 \> 4,5   \>     4,5\\
  8 \> \Abe{$\ZZ_8$}, \noAbe{octic($D_8$)}, \noAbe{quaternion($Q_8$)} \> 4,10+2,6+1 \> 4,9,5  \>     4,9,5 \\
   \> \Abe{$\ZZ_4\times\ZZ_2$}, \Abe{$\ZZ_2^3$} \> 8,16 \>8,16  \>    8,15\\
  9 \> \Abe{$\ZZ_9$}, \Abe{$\ZZ_3^2$} \> 3,6 \>3,6  \>        3,7 \\
  10 \> \Abe{$\ZZ_{10}$}, \noAbe{dihedral($D_{10}$)} \> 4,8 \> 4,7    \>      4,7  \\
  12 \> \Abe{$\ZZ_{12}$}, \noAbe{tetrahedral($A_4$)}, \noAbe{$D_{12}$} \> 6,10,16 \>6,9,13 \>        6,9,13  \\
   \>  \noAbe{Dicyclic($Dic_3$)}, \Abe{$\ZZ_2^2\times\ZZ_3$} \> 8,10 \> 7,10  \>       7,10 \\
  14 \> \Abe{$\ZZ_{14}$}, \noAbe{$D_{14}$} \> 4,10 \> 4,9  \>       4,9   \\
  15 \> \Abe{$\ZZ_{15}$} \> 4 \>  4  \>        4 \\
  16 \> \Abe{$\ZZ_{16}$}, \noAbe{$Dic_4$}, \noAbe{$D_{16}$}, \noAbe{$Q_8\times\ZZ_2$}  \> 5,11,19,19 \>   5,8,16,14  \>         5,8,16 \\
   \> \Abe{$\ZZ_8\times\ZZ_2$}, \Abe{$\ZZ_4^2$}, \Abe{$\ZZ_4\times \ZZ_2^2$} , \Abe{$\ZZ_2^4$} \> 11,15,27,67 \>  11,15,27,67 \>        11,16,25,51   \\
   \>  \noAbe{Modular group of order 16} \> 11 \>  10\>         10  \\
   \> \noAbe{Quasihedral of order 16} \> 15 \>  12 \>       12  \\
   \> \noAbe{$D_8\times\ZZ_2$} \> 35 \>  30  \>     28   \\
   \> \noAbe{$(\ZZ_4 \times\ZZ_2) \rtimes \ZZ_2$} \> 23 \>  22     \>      21  \\
   \>  \noAbe{$\ZZ_4\rtimes\ZZ_4 \textrm{ or }G_{4,4}$}\> 15 \>  14   \>        14  \\
   \>  \noAbe{$Q_8\rtimes\ZZ_2$}\> 23 \> 18   \>     18   \\
  18 \> \Abe{$\ZZ_{18}$}, \Abe{$\ZZ_3\times\ZZ_6$}, \noAbe{$D_{18}$} \> 6,12,16 \>  6,12,12   \>      6,14,12   \\
   \> \noAbe{$(\ZZ_3\times\ZZ_3)\rtimes\ZZ_2$}, \noAbe{$\Sigma_3\times\ZZ_3$} \> 28,14 \>  15,12 \> 16+,13  \\
  20 \> \Abe{$\ZZ_{20}$}, \Abe{$\ZZ_{10}\times\ZZ_2$}, \noAbe{$D_{20}$} \> 6,10,22 \> 6,10,19 \>          6,10,19   \\
   \>  \noAbe{$Dic_5$}, \noAbe{metacyclic}  \> 10,14 \> 9,12   \>        9+2,12   \\
  21 \> \Abe{$\ZZ_{21}$}, \noAbe{$\ZZ_7\rtimes\ZZ_3$} \> 4,10 \>  4,9    \>         4,9  \\
  22 \> \Abe{$\ZZ_{22}$}, \noAbe{$D_{22}$} \> 4,14 \> 4,13     \>     4,13\\
  24 \> \Abe{$\ZZ_{24}$},\,\Abe{$\ZZ_2\times\ZZ_{12}$},\,\Abe{$\ZZ_2\times\ZZ_2\times\ZZ_6$} \> 8,16,32 \> 8,16,32\>          8,16,30  \\
   \> \noAbe{$D_8\times\ZZ_3$},\,\noAbe{$Q_8\times\ZZ_3$} \> 20,12 \> 18,10     \>        18,10   \\
   \> \noAbe{$Sl(2,3)$}, \noAbe{$A_4\times\ZZ_2$} \> 15,26 \>  13,24  \>        13,23  \\
   \> \noAbe{$\Sigma_4$}, \noAbe{$D_{24}$}, \noAbe{$Dic_6$} \> 30,34,18 \>21,24,12  \>      21,24,12   \\
   \> \noAbe{$\ZZ_2\times\ZZ_2\times \Sigma_3$}, \noAbe{$\ZZ_2\times (\ZZ_3\rtimes\ZZ_4)$} \> 54,22 \> 43,19  \>        40,19     \\
   \> \noAbe{$\ZZ_4\times\Sigma_3$},\,\noAbe{$\ZZ_3\rtimes\ZZ_8$} \> 26,10 \>  21,9  \>     21,9   \\
   \> \noAbe{$(\ZZ_6 \times \ZZ_2) \rtimes \ZZ_2$} \> 30 \>  22     \>      22  \\
  25 \>  \Abe{$\ZZ_{25}$}, \Abe{$\ZZ_5^2$} \> 3,8    \>  3,8    \>      3,11  \\
  26 \>  \Abe{$\ZZ_{26}$}, \noAbe{$D_{26}$} \>  4,16 \>   4,15     \>        4,15  \\
  27 \> \Abe{$\ZZ_{27}$}, \Abe{$\ZZ_9\times\ZZ_3$}, \Abe{$\ZZ_3^3$} \> 4,10,28 \>   4,10,28   \>      4,12,40 \\
        \> \noAbe{$(\ZZ_3\times\ZZ_3)\rtimes\ZZ_3$}, \noAbe{$\ZZ_9\rtimes \ZZ_3$}  \> 19,10 \> 18,9   \>         22,10   \\
  28 \> \Abe{$\ZZ_{28}$}, \Abe{$\ZZ_{14}\times\ZZ_2$}, \noAbe{$D_{28}$}, \noAbe{$Dic_7$} \> 6,10,28,12  \>   6,10,25,11  \>      6,10,25,11   \\
  30 \> \Abe{$\ZZ_{30}$}, \noAbe{$D_{30}$}, \noAbe{$D_{10}\times\ZZ_3$}, \noAbe{$D_{6}\times\ZZ_5$}      \>     8,28,16,12 \>    8,19,14,10   \>      8,19,14,10 
\end{tabbing}

\section{Combinatorial description of this invariant}
\label{sec1}

We define the set $G(n)$ of {\bf minimal} sequences of $G$, with degree $n$, as follows
\bena
\left\{  (g_1,k_1,g_2,k_2,\cdots,g_n,k_n):\prod_{i=1}^n[k_i,g_i]=1\textrm{ and }\prod_{j=1}^r[k_j,g_j]\neq 1\neq\prod_{j=r+1}^n[k_j,g_j]\,\textrm{ for } 1\leq i <n-1\right\}\,.
\eena
The case $n=1$ is given by 
\bena G(1)=\{(k,g):[k,g]=1\}\,.\eena
Now identify these sequence as follows:
\begin{itemize}
\item[1)]take one index $i\in\{1,\cdots,n\}$ and identify the sequence $(g_1,k_1,\cdots,{\bf g_i,k_i},\cdots,g_n,k_n)$ with one of the possibilities,
\begin{itemize}
\item[a)] $(g_1,k_1,\cdots,{\bf g_i,k_ig_i^m},\cdots,g_n,k_n)$, for $m\in\ZZ$, and
\item[b)] $(g_1,k_1,\cdots,{\bf g_ik_ig_i^{-1},g_i^{-1}},\cdots,g_n,k_n)$;
\end{itemize}
\item[2)] for subsequent indices $i$ and $i+1$ we identify the sequence $(g_1,k_1,\cdots,{\bf g_i,k_i,g_{i+1},k_{i+1}},\cdots,g_n,k_n)$ with one of the possibilities,
\begin{itemize}
\item[a)] $(g_1,k_1,\cdots,{\bf [g_i,k_i]g_{i+1},k_{i+1},k_{i+1}g_ik_{i+1}^{-1},k_{i+1}k_ik_{i+1}^{-1}},\cdots,g_n,k_n)$, and 
\item[b)] $(g_1,k_1,\cdots,{\bf k_i^{-1}g_{i+1}k_i,k_i^{-1}k_{i+1}k_i,k_i^{-1}[k_i,g_i]k_ig_i,k_i},\cdots,g_n,k_n)$.
\end{itemize}
\end{itemize}
\begin{defn}
Consider the graph $\ma{G}(n)$ consisting of vertices $G(n)$ and edges given by the application of the preceding identifications. We define the positive number $r_n(G)$ as the number of connected components of $\ma{G}(n)$.
\end{defn}
Notice that for an abelian group $G$ the set $G(i)$ are empty for $i>1$, hence we obtain the next lemma.
\begin{lem}\label{l1} For $G$ abelian, we obtain $r_i(G)=0$, for $i>1$.
\end{lem}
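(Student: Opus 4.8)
The plan is to show that, for $G$ abelian, the vertex set $G(n)$ of the graph $\ma{G}(n)$ is empty whenever $n\geq 2$; the conclusion $r_n(G)=0$ then follows at once, since a graph without vertices has no connected components.

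First I would unwind the definition of $G(n)$. A sequence $(g_1,k_1,\ldots,g_n,k_n)$ lies in $G(n)$ only if it satisfies the closure condition $\prod_{i=1}^n[k_i,g_i]=1$ \emph{and} the minimality condition, which requires that every proper initial partial product $\prod_{j=1}^r[k_j,g_j]$ with $1\leq r\leq n-1$ be nontrivial (equivalently, by the closure condition, that the complementary tail $\prod_{j=r+1}^n[k_j,g_j]$ be nontrivial as well; i.e.\ the sequence cannot be cut into two shorter admissible sequences). In particular, for $n\geq 2$, membership in $G(n)$ forces $[k_1,g_1]=\prod_{j=1}^1[k_j,g_j]\neq 1$.

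Next I would invoke the hypothesis: in an abelian group $[k,g]=1$ for all $k,g\in G$, so $[k_1,g_1]=1$, contradicting the requirement just derived. Hence $G(n)=\emptyset$ for all $n\geq 2$, so $\ma{G}(n)$ is the empty graph and $r_n(G)=0$, as claimed. There is essentially no obstacle here; the only points to keep straight are the indexing in the minimality condition and the harmless convention that the empty graph contributes $0$ connected components. (Note that this argument says nothing about $r_1(G)$, which is precisely the nontrivial part of the invariant discussed elsewhere in the paper.)
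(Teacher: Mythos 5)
Your argument is correct and is exactly the paper's: the remark preceding the lemma observes that for abelian $G$ the sets $G(i)$ are empty for $i>1$, which is precisely your point that minimality forces $[k_1,g_1]\neq 1$ while commutativity makes all commutators trivial. You simply spell out the details the paper leaves implicit.
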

For $n=1$ and $G$ not necessary abelian, there are two identifications in $G(1)$, 
\bena\label{in4}
(g,k)\sim(g,kg^m)\textrm{ for }m\in \ZZ
\textrm{ and  }(g,k)\sim(k,g^{-1})\,,
\eena 
since $[k,g]=1$. This translates into an action of the special linear group $\op{SL}(2,\ZZ_2)$ into the set $G(1)=\{(k,g):[k,g]=e\}$, by the application of the matrices
\bena\left(
\begin{array}{cc}
1&0\\ 1 & 1
\end{array}
\right)\textrm{   and   }\left(
\begin{array}{cc}
0&1\\ -1 & 0
\end{array}
\right)\,,\eena
respectively. Thus we obtain the following result.

\begin{prop} There is an action of the special linear group $\op{SL}(2,\ZZ)$ into the set $G(1)=\{(k,g):[k,g]=e\}$, whose number of orbits coincides with $r_1(G)$.
\end{prop}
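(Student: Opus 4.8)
The plan is to identify the vertex set $G(1)$ with $\op{Hom}(\ZZ^2,G)$ and the two identifications defining the edges of $\ma{G}(1)$ with the orbit relation of the standard action of $\op{SL}(2,\ZZ)$ on this $\op{Hom}$-set. First I would fix the bijection $G(1)\xrightarrow{\sim}\op{Hom}(\ZZ^2,G)$ that sends a commuting pair to the unique homomorphism having its two entries as the images of the standard basis of $\ZZ^2$; this is well defined and onto precisely because the single relation $[k,g]=e$ is exactly the relation $\ZZ^2$ imposes on an ordered pair of generators, and it is injective because the standard basis generates $\ZZ^2$. (No finiteness of $G$ is needed here.)

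Next, since $\op{SL}(2,\ZZ)$ is a subgroup of $\op{Aut}(\ZZ^2)$, it acts naturally on $\op{Hom}(\ZZ^2,G)$ by precomposition, and transporting this action along the bijection above yields the asserted action of $\op{SL}(2,\ZZ)$ on $G(1)$. The substantive step is then a direct computation showing that, under this dictionary, the matrix $\left(\begin{smallmatrix}1&0\\1&1\end{smallmatrix}\right)$ acts exactly by the move 1a) with $m=1$, its $m$-th power acts by 1a) for an arbitrary $m\in\ZZ$, and the matrix $\left(\begin{smallmatrix}0&1\\-1&0\end{smallmatrix}\right)$ acts exactly by the move 1b); here one uses the commutativity $[k,g]=e$ to rewrite $gkg^{-1}=k$, which is why the formula defining 1b) on $\ma{G}(1)$ collapses to this neat matrix action.

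It remains to combine these two computations with the classical fact that $\left(\begin{smallmatrix}1&0\\1&1\end{smallmatrix}\right)$ and $\left(\begin{smallmatrix}0&1\\-1&0\end{smallmatrix}\right)$ generate $\op{SL}(2,\ZZ)$. On one hand, each edge of $\ma{G}(1)$ is realized by an element of $\op{SL}(2,\ZZ)$ acting on $G(1)$, so any two pairs in the same connected component lie in a common orbit. On the other hand, because the elements just listed (all powers of the first matrix, together with the second) generate the group, any two pairs in the same orbit are joined by a finite chain of such moves, hence lie in the same connected component. Therefore the partition of $G(1)$ into connected components of $\ma{G}(1)$ coincides with the partition into $\op{SL}(2,\ZZ)$-orbits, and the number of orbits equals $r_1(G)$ by the very definition of $r_1$.

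The only delicate part is the bookkeeping in the middle step: one must be careful with conventions — rows versus columns, left versus right action, and whether one precomposes with $A$, $A^{-1}$, $A^{T}$ or $A^{-T}$ — so that the two displayed generators act by the moves 1a), 1b) on the nose rather than by a transpose or conjugate of them. Once that dictionary is pinned down, the rest is routine, the sole external ingredient being the standard generation result for $\op{SL}(2,\ZZ)$.
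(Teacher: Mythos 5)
Your proposal is correct and follows essentially the same route as the paper, which simply observes that the two identifications $(g,k)\sim(g,kg^m)$ and $(g,k)\sim(k,g^{-1})$ are realized by the matrices $\left(\begin{smallmatrix}1&0\\ 1&1\end{smallmatrix}\right)$ and $\left(\begin{smallmatrix}0&1\\ -1&0\end{smallmatrix}\right)$, which generate $\op{SL}(2,\ZZ)$. Your reformulation via $\op{Hom}(\ZZ^2,G)$ and precomposition is a cleaner packaging of the same idea and supplies details (well-definedness of the action, the generation argument) that the paper leaves implicit.
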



An important component of the invariant $r(G)$ is given by the cyclic subgroups for the summand $r_1(G)$. In fact, two different cyclic subgroups will produce different classes with size given by the 
Jordan's function $J_k(n)$ (for $k=2$) which is a generalization of the Euler's function. 

\begin{prop}
Let $\op{Cyc}(G)$ be the number of cyclic subgroups of $G$ and let $\op{Com(G)}$ be the number of commuting pairs, i.e., $|\{(g,k):[k,g]=1\}|$. We have the inequality 
\ben
\op{Cyc}(G)\leq r_1(G)<\op{Com}(G)\,.
\een
\end{prop}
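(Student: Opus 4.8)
The plan is to establish the two inequalities separately, using the $\op{SL}(2,\ZZ)$-action on $G(1)=\{(k,g):[k,g]=e\}$ from the previous proposition, since $r_1(G)$ is exactly the number of orbits of this action. For the strict upper bound $r_1(G)<\op{Com}(G)$, I would argue that the action is nontrivial: every orbit of a group action has size at least $1$, so $r_1(G)\le\op{Com}(G)$, and equality would force every orbit to be a singleton, i.e. every pair $(k,g)$ fixed by the whole group. But the generator $\left(\begin{smallmatrix}1&0\\1&1\end{smallmatrix}\right)$ sends $(g,k)\mapsto(g,kg)$, and taking any $g\neq e$ (which exists since $G$ is a nontrivial finite group — if $G$ is trivial the statement is vacuous or should be excluded) together with $k=e$ gives $(g,e)\mapsto(g,g)\neq(g,e)$, so that pair lies in an orbit of size $\ge 2$. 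Hence at least one orbit is non-singleton and the inequality is strict. (One should double-check the degenerate case $|G|=1$ and note the hypothesis implicitly excludes it, or state it for $|G|>1$.)

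For the lower bound $\op{Cyc}(G)\le r_1(G)$, the idea is to exhibit an injection from the set of cyclic subgroups of $G$ into the set of $\op{SL}(2,\ZZ)$-orbits on $G(1)$. To each cyclic subgroup $C=\langle c\rangle$ I would associate the orbit of a distinguished pair, for instance $(c,c)$ (both entries a fixed generator), or more canonically the set of all pairs $(k,g)$ with $\langle k,g\rangle = C$. The key observation is that the two generating identifications preserve the subgroup generated by the pair: replacing $k$ by $kg^m$ does not change $\langle k,g\rangle$, and replacing $(k,g)$ by $(g^{-1},k)$ (equivalently the swap-and-invert) also preserves it, since $\langle g^{-1},k\rangle=\langle g,k\rangle$. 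Therefore the assignment $(k,g)\mapsto\langle k,g\rangle$ is constant on $\op{SL}(2,\ZZ)$-orbits, giving a well-defined map from orbits to subgroups of $G$; its image contains all cyclic subgroups (the orbit of $(c,e)$ maps to $\langle c\rangle$), and distinct cyclic subgroups have disjoint preimages, so the number of orbits is at least the number of cyclic subgroups.

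I would then combine the two bounds. The main obstacle, and the part requiring the most care, is the lower bound: one must verify that \emph{every} orbit-invariant is genuinely $\langle k,g\rangle$ and in particular that the map is well-defined, i.e. check both identifications (a) and (b) in item 1 of the definition really do preserve the generated subgroup — identification (b) sends $(g,k)$ to $(gkg^{-1},g^{-1})$, and one needs $\langle gkg^{-1},g^{-1}\rangle=\langle k,g\rangle$, which holds because $g^{-1}$ recovers $g$ and then $gkg^{-1}$ recovers $k$. A subtler point is whether one wants an injection on the nose or merely the counting inequality; since we only need $\op{Cyc}(G)\le r_1(G)$, it suffices that the orbits mapping to the $\op{Cyc}(G)$ distinct cyclic subgroups are pairwise distinct, which is immediate from the map being well-defined and those target subgroups being distinct. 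Finally, for the strict inequality the only thing to be careful about is the trivial-group edge case and the (harmless) possibility that $\op{Com}(G)$ is being counted as ordered pairs — which it is, per the statement $|\{(g,k):[k,g]=1\}|$ — so the action on a set of that cardinality with at least one non-fixed point gives strictly fewer orbits.
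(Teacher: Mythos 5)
The paper gives no written proof of this proposition, only the preceding remark that ``two different cyclic subgroups will produce different classes''; your argument --- the orbit-invariant $(k,g)\mapsto\langle k,g\rangle$ (preserved by both identifications) giving $\op{Cyc}(G)\leq r_1(G)$ via the orbits of the pairs $(c,e)$, together with exhibiting a non-singleton orbit for the strict bound $r_1(G)<\op{Com}(G)$ --- is correct and supplies exactly that intended reasoning. Your caveat is also accurate: for the trivial group one has $r_1(G)=\op{Com}(G)=1$, so the strict inequality implicitly assumes $|G|>1$.
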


\section{The invariant for some families of groups}
\label{sec2}
Hereafter we find the explicit values for the cyclic group $G=\ZZ_n$, the dihedral $D_{2n}$, the dicyclic and torsion groups $\ZZ_{p^n}$.

\begin{thm}\label{teo1} The number $r_1(\ZZ_n)$ coincides with the number of subgroups of $\ZZ_n$.
\end{thm}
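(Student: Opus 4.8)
The plan is to reduce the statement to the Proposition proved just above: $r_1(\ZZ_n)$ equals the number of orbits of the linear action of $\op{SL}(2,\ZZ)$ on $G(1)$. Since $\ZZ_n$ is abelian, every pair commutes, so $G(1)=\ZZ_n\times\ZZ_n$, and (writing $\ZZ_n$ additively) the two generating matrices act by $(g,k)\mapsto(g,g+k)$ and $(g,k)\mapsto(k,-g)$; hence the action is the standard linear action of $\op{SL}(2,\ZZ)$ on $\ZZ_n^2$, which factors through the reduction $\op{SL}(2,\ZZ)\to\op{SL}(2,\ZZ_n)$. So it suffices to count the orbits of this action on $\ZZ_n^2$ and to exhibit a bijection of the orbit set with the set of subgroups of $\ZZ_n$, i.e.\ with the divisors of $n$.

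First I would attach to each $(a,b)\in\ZZ_n^2$ its content $d(a,b):=\gcd(a,b,n)$, which is a well-defined divisor of $n$ (it divides $n$, hence does not depend on the chosen integer representatives of $a,b$). Because $\op{SL}(2,\ZZ)$ acts by integer matrices of determinant $1$, if $M\in\op{SL}(2,\ZZ)$ and $(a',b')\equiv M\cdot(a,b)$ modulo $n$, then $d(a,b)$ divides suitable representatives of $a',b'$ and divides $n$, so $d(a,b)\mid d(a',b')$; applying the same argument to $M^{-1}\in\op{SL}(2,\ZZ)$ gives the reverse divisibility, so $d$ is constant on each orbit. This already shows there are at least $\tau(n)$ orbits, one for each possible value of the content.

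Next I would show that every orbit contains the normal form $(d,0)$ with $d=d(a,b)$, so there are exactly $\tau(n)$ orbits. Given $(a,b)$ with content $d$, the key step is a lifting lemma: one can choose integer representatives $\widetilde a,\widetilde b$ of $a,b$ with $\gcd(\widetilde a,\widetilde b)=d$ exactly, not merely $\gcd(\widetilde a,\widetilde b,n)=d$. Granting this, the Euclidean algorithm produces a matrix in $\op{SL}(2,\ZZ)$ carrying $(\widetilde a,\widetilde b)$ to $(d,0)$ over $\ZZ$, and reducing modulo $n$ puts $(a,b)$ in the orbit of $(d,0)$. Together with the invariance of the content, distinct divisors give distinct orbits and every orbit is reached, so the orbit set is in bijection with $\{d:d\mid n\}$, which is in bijection with the subgroups of $\ZZ_n$ (the subgroup of index $d$). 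Hence $r_1(\ZZ_n)=\tau(n)$, the number of subgroups of $\ZZ_n$; combined with Lemma~\ref{l1} this also recovers $r(\ZZ_n)=\tau(n)$.

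The main obstacle is the lifting lemma. The subtlety is that the $\gcd$ over $\ZZ$ can drop strictly below $d$ after reduction only apparently; one must genuinely improve the representatives. I would prove it by replacing $\widetilde a$ with $\widetilde a+kn$ and choosing $k$ so that, prime by prime, no prime dividing $\widetilde b$ but not $d$ also divides $\widetilde a+kn$ --- an elementary argument via the Chinese Remainder Theorem, which is essentially the surjectivity of $\op{SL}(2,\ZZ)\to\op{SL}(2,\ZZ_n)$. Everything else --- invariance of the content, the Euclidean reduction, and the divisor/subgroup correspondence --- is routine.
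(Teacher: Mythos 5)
Your proof is correct and follows essentially the same route as the paper's: both reduce a pair to the normal form $(\gcd,0)$ via the Euclidean algorithm, the paper working directly with the identifications on positive integer representatives and you phrasing it as the $\op{SL}(2,\ZZ)$-orbit classification of $\ZZ_n^2$ by the content $\gcd(a,b,n)$. You do supply two details the paper leaves implicit --- the orbit-invariance of the content (which is the ``only if'' half of the paper's assertion that $(p,0)\sim(q,0)$ exactly when $\langle p\rangle=\langle q\rangle$) and the lifting of representatives so that $\gcd(\widetilde a,\widetilde b)$ equals $\gcd(a,b,n)$ on the nose; for the latter, state the CRT condition on the primes dividing $\widetilde b/d$ rather than on those dividing $\widetilde b$ but not $d$, and first pick a nonzero representative of $b$ so that only finitely many primes are involved.
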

\begin{proof}
Set $(g,k)\in\ZZ_n\times\ZZ_n$, we can consider $k$ and $g$ positive numbers. By the algorithm axiom, there exist positive numbers $q_i$ and $r_j$, with $1\leq i\leq m$ and  $0\leq j\leq m$, such that
$r_0=k$, $r_1=g$, inequalities $0\leq r_m<\cdots <r_2<r_1=g$ and the equations
\bena
\begin{array}{l}
k=q_1g+r_2\\
g=q_2r_2+r_3\\
r_2=q_3r_3+r_4\\
\vdots\\
r_{m-2}=q_{m-1}r_{m-1}+r_m\\
r_{m-1}=q_mr_m\,,
\end{array}
\eena 
and we obtain the identifications 
\bena
\begin{array}{l}
(g,k)=(g,q_1g+r_2)\sim(g,r_2)\\
(g,r_2)=(q_2r_2+r_3,r_2)\sim (r_3,r_2)\\
(r_3,r_2)=(r_3,q_3r_3+r_4)\sim (r_3,r_4)\\
\vdots\\
\end{array}
\eena
ending in the pair $(r_m,0)$ or $(0,r_m)$. Notice for example that $(p,0)\sim(q,0)$ if and only if the generated subgroups by $p$ and $q$ are equal, i.e., $\langle p \rangle=\langle q \rangle$. Thus this subgroup generated by the residual element $r_m$ gives the corresponding subgroup of $\ZZ_n$ and the theorem follows. 
\end{proof}

\begin{thm}For the dihedral $D_{2n}$ and the dicyclic $Dic_n$, the numbers $r_1(D_{2n})$
 and $r_1(Dic_n)$ have the following values,
 \bena
 r_1(D_{2n})=
 \left\{\begin{array}{ll}
 \op{Cyc}(D_{2n})& \textrm{ if }n=2k+1\,,\\
 \op{Cyc}(D_{2n})+k&\textrm{ if }n=2k\,.
 \end{array}	
 \right.\textrm{ and }\hspace{0.5cm}
 r_1(Dic_{n})=\op{Cyc}(Dic_n)\,,
  \eena
  where $\op{Cyc}(D_{2n})=n+\tau(n)$, where $\tau(n)$ is the number of divisors of $n$.
 
 \end{thm}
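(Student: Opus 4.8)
The strategy is to enumerate $G(1) = \{(g,k) : [k,g]=1\}$ for $G = D_{2n}$ and $G = Dic_n$, partition the commuting pairs according to the nature of the subgroup generated by the two coordinates, and then count $\op{SL}(2,\ZZ)$-orbits on each piece using the action described after Lemma~\ref{l1}. Writing $D_{2n} = \langle r,s \mid r^n = s^2 = 1,\ srs = r^{-1}\rangle$, a pair $(g,k)$ commutes iff the subgroup $\langle g,k\rangle$ is abelian; since every abelian subgroup of $D_{2n}$ is either contained in the cyclic part $\langle r\rangle \cong \ZZ_n$ or is a Klein four-group (when $n$ is even) or an order-$2$ subgroup generated by a reflection, I would split $\op{Com}(D_{2n})$ into (i) pairs with both entries in $\langle r\rangle$, (ii) pairs involving at least one reflection. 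For part (i) the computation reduces to $\ZZ_n$, and by Theorem~\ref{teo1} the number of $\op{SL}(2,\ZZ)$-orbits there is exactly the number of subgroups of $\ZZ_n$, i.e.\ $\tau(n)$. For part (ii) one checks that a commuting pair with a reflection generates either an order-$2$ group $\langle sr^i\rangle$ (contributing, after taking orbits, one class per reflection, hence $n$ classes — these together with $\tau(n)$ give $\op{Cyc}(D_{2n}) = n + \tau(n)$), or, when $n = 2k$, a Klein four-group $\langle r^k, sr^i\rangle$; the latter are the genuinely non-cyclic abelian subgroups and there are $k$ of them, each contributing one extra orbit.

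The key technical step is therefore the orbit count on the set of commuting pairs lying inside a fixed abelian subgroup $A \leq G$. For $A$ cyclic this is Theorem~\ref{teo1}. For $A \cong \ZZ_2 \times \ZZ_2$ I would show directly that $\op{SL}(2,\ZZ)$ acts transitively on the $9$ commuting pairs $\{(g,k) : g,k \in A\}$ that are \emph{not} both trivial together with... more carefully: the action of $\op{SL}(2,\ZZ)$ on $A \times A = (\ZZ_2)^2 \times (\ZZ_2)^2$ is through $\op{SL}(2,\ZZ_2) \cong \Sigma_3$ acting diagonally on $\ZZ_2^2$, and one enumerates the orbits of $\Sigma_3$ on ordered pairs of vectors in $\ZZ_2^2$: there is the orbit $\{(0,0)\}$, the orbit of $(v,0)$ with $v\neq 0$, the orbit of $(0,v)$, the orbit of $(v,v)$, and the orbit of $(v,w)$ with $v,w$ distinct and nonzero — but several of these merge, and the point is that the pairs spanning all of $A$ form a single orbit, so each Klein four-subgroup contributes exactly $1$ to $r_1$ beyond what its order-$2$ subgroups contribute. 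Assembling: $r_1(D_{2n}) = \op{Cyc}(D_{2n})$ for $n$ odd, and $\op{Cyc}(D_{2n}) + k$ for $n = 2k$.

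For $Dic_n = \langle a,b \mid a^{2n} = 1,\ b^2 = a^n,\ bab^{-1} = a^{-1}\rangle$ the argument is parallel but cleaner: here $b^2 = a^n \neq 1$, so $b$ has order $4$, and one checks that $Dic_n$ has no subgroup isomorphic to $\ZZ_2 \times \ZZ_2$ — every element of order $2$ is the unique central involution $a^n$. Hence every abelian subgroup is cyclic, so the partition of $\op{Com}(Dic_n)$ into commuting pairs has one block per cyclic subgroup, each block is a set of commuting pairs inside a cyclic group, Theorem~\ref{teo1} (applied to each cyclic subgroup) gives exactly one orbit per block, and therefore $r_1(Dic_n) = \op{Cyc}(Dic_n)$. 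The main obstacle I anticipate is not the group theory but the bookkeeping in the Klein-four orbit analysis: one must be careful that the $\op{SL}(2,\ZZ)$-orbit of a pair generating $A$ does not accidentally absorb a pair generating a proper subgroup (it cannot, since the subgroup $\langle g,k\rangle$ is an invariant of the orbit), and conversely that all generating pairs of a fixed $A \cong \ZZ_2^2$ really do form a single orbit rather than two or three; this is where the explicit $\Sigma_3$-action computation is needed, and it is the step I would write out in full.
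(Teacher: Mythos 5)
The paper states this theorem without any proof (no \texttt{proof} environment follows it, and the subsequent corollary is likewise unproved), so there is no argument of the authors' to compare yours against; what you have written is a proof the paper omits. On its merits, your strategy is sound and essentially complete. The two points that make it work are exactly the ones you identify: first, the genus-one identifications $(g,k)\sim(g,kg^m)$ and $(g,k)\sim(k,g^{-1})$ never leave the subgroup $\langle g,k\rangle$, so that subgroup is an orbit invariant and the orbit count for pairs lying in a fixed abelian $A\leq G$ is the same whether computed in $G$ or in $A$ as an abstract group; second, for each abelian subgroup the \emph{generating} pairs form a single orbit. For cyclic $A$ the latter is precisely what the Euclidean-algorithm proof of Theorem \ref{teo1} gives (orbits of $\ZZ_m\times\ZZ_m$ biject with subgroups of $\ZZ_m$ via $(g,k)\mapsto\langle g,k\rangle$), and for $A\cong\ZZ_2\times\ZZ_2$ your $\op{SL}(2,\ZZ_2)\cong\Sigma_3$ computation is correct: that group has order $6$ and acts simply transitively on the $6$ ordered bases of $\ZZ_2^2$, so the generating pairs form one orbit and each Klein four-subgroup contributes exactly one class beyond those of its cyclic subgroups. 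Summing over the $\tau(n)$ cyclic subgroups of $\langle r\rangle$, the $n$ reflection subgroups, and (for $n=2k$) the $k$ Klein four-subgroups gives the stated formula, and the verification that $Dic_n$ has a unique involution (hence only cyclic abelian subgroups) gives $r_1(Dic_n)=\op{Cyc}(Dic_n)$. The only cosmetic slip is in the $Dic_n$ paragraph, where "each block is a set of commuting pairs inside a cyclic group" should read "the block attached to a cyclic subgroup $C$ is the set of pairs generating $C$"; as written it would double-count proper subgroups. Writing out the $\Sigma_3$ orbit enumeration in full, as you propose, would complete the argument.
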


\begin{cor}
For the dihedral $D_{2n}$ and the dicyclic $Dic_n$, the numbers $r_1(D_{2n})$
 and $r_1(Dic_n)$ coincide with the number of abelian subgroups of $D_{2n}$ and $Dic_n$, respectively.
\end{cor}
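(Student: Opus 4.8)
The plan is to combine the previous theorem with an explicit enumeration of the abelian subgroups of $D_{2n}$ and $Dic_n$, and to check that this count agrees with the stated values of $r_1$. Since $r_1(D_{2n})$ and $r_1(Dic_n)$ are already given in terms of $\op{Cyc}$ (with a correction term $k$ in the even dihedral case), it suffices to determine precisely by how much the abelian subgroups outnumber the cyclic ones.

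For $D_{2n}$, I would use the standard classification of subgroups: every subgroup is either contained in the rotation subgroup $\langle r\rangle\cong\ZZ_n$ — one cyclic subgroup for each divisor of $n$, hence $\tau(n)$ of them — or else contains a reflection, in which case it is isomorphic to $D_{2m}$ for some $m\mid n$, and for each such $m$ there are exactly $n/m$ of them. A subgroup isomorphic to $D_{2m}$ is abelian precisely when $m\le 2$: for $m=1$ it is one of the $n$ order-two subgroups generated by a single reflection (cyclic), and for $m=2$ it is a Klein four-group $\ZZ_2^2$, which exists only when $n$ is even and then occurs $n/2$ times. Summing, the number of abelian subgroups of $D_{2n}$ is $\op{Cyc}(D_{2n})=n+\tau(n)$ when $n$ is odd and $\op{Cyc}(D_{2n})+k$ when $n=2k$, which matches the two cases of the preceding theorem verbatim. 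Here one must take care that the $n$ reflection-generated subgroups are genuinely distinct from the $\tau(n)$ rotation subgroups and from one another, so that indeed $\op{Cyc}(D_{2n})=n+\tau(n)$ with no overcounting.

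For $Dic_n$, write $Dic_n=\langle a,b\mid a^{2n}=1,\ b^2=a^n,\ bab^{-1}=a^{-1}\rangle$. I claim every abelian subgroup of $Dic_n$ is cyclic, which by the previous theorem immediately yields that their number is $\op{Cyc}(Dic_n)=r_1(Dic_n)$. Indeed, a subgroup contained in $\langle a\rangle$ is cyclic; and if $H\le Dic_n$ is abelian but not contained in $\langle a\rangle$, choose $ba^j\in H$. Since conjugation by $ba^j$ inverts every power of $a$, commutativity forces $H\cap\langle a\rangle\subseteq\{1,a^n\}$, while $(ba^j)^2=a^n$ shows $a^n\in H$; because $[Dic_n:\langle a\rangle]=2$ this gives $|H|=4$ and $H=\langle ba^j\rangle\cong\ZZ_4$, hence cyclic. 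This proves the claim.

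The main obstacle is not conceptual but bookkeeping: pinning down which dihedral (resp. dicyclic) subgroups are abelian, controlling the repetitions among the order-two reflection subgroups and among the order-four subgroups $\langle ba^j\rangle$ (where $ba^j$ and $ba^{j+n}$ generate the same subgroup), and then matching the totals with the piecewise formula for $r_1$ from the previous theorem. The structural fact that keeps the dicyclic case clean is that the unique central involution $a^n$ is forced to lie in any abelian subgroup meeting the nontrivial coset, which collapses every such subgroup to a cyclic group of order $4$; the analogous phenomenon fails in the dihedral case, producing the extra Klein four-groups and hence the correction term $k$.
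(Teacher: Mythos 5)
Your argument is correct and is exactly the derivation the paper intends (the corollary is stated without proof there): you take the formulas for $r_1(D_{2n})$ and $r_1(Dic_n)$ from the preceding theorem and verify they equal the abelian subgroup counts, using the standard classification of subgroups of $D_{2n}$ (the extra $k$ Klein four-subgroups for $n=2k$) and the fact that every abelian subgroup of $Dic_n$ not contained in $\langle a\rangle$ is forced to be $\langle ba^j\rangle\cong\ZZ_4$. The enumeration and the handling of the overcounting issues are accurate, so nothing is missing.
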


\begin{thm} Let $p$ be a prime number, for the $n$-product $\ZZ_p^n$, we obtain the equality \ben r_1(\ZZ_p^n)=\frac{p^{2n-1}+p^{n+1}-p^{n-1}+p^2-p-1}{p^2-1}\,.\een
\end{thm}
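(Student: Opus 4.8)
The plan is to reduce the computation to counting orbits of a group action over $\mathbb{F}_p$. Since $\ZZ_p^n$ is abelian the commutator conditions in the definition of $G(1)$ are vacuous, so $G(1)=\ZZ_p^n\times\ZZ_p^n$, and by the Proposition preceding this statement $r_1(\ZZ_p^n)$ is the number of orbits of $\op{SL}(2,\ZZ)$ on this set. Writing the group additively and identifying a pair $(g,k)$ with the $n\times 2$ matrix $[\,g\mid k\,]$ over $\mathbb{F}_p$ whose columns are $g$ and $k$, the two generating identifications $(g,k)\sim(g,k+mg)$ and $(g,k)\sim(k,-g)$ become right multiplication by $\left(\begin{smallmatrix}1&m\\0&1\end{smallmatrix}\right)$ and $\left(\begin{smallmatrix}0&-1\\1&0\end{smallmatrix}\right)$. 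These matrices generate $\op{SL}(2,\ZZ)$, so the action is simply $M\mapsto M\gamma$, and since the entries get reduced mod $p$ it factors through the reduction $\op{SL}(2,\ZZ)\to\op{SL}(2,\mathbb{F}_p)$, which is surjective. Hence $r_1(\ZZ_p^n)$ equals the number of orbits of $\op{SL}(2,\mathbb{F}_p)$ acting by right multiplication on $\op{Mat}_{n\times 2}(\mathbb{F}_p)$.

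I would then stratify $\op{Mat}_{n\times 2}(\mathbb{F}_p)$ by the column space $W(M)=\{Mv:v\in\mathbb{F}_p^2\}\subseteq\mathbb{F}_p^n$, which has dimension $0$, $1$ or $2$ and is an orbit invariant because right multiplication by an invertible matrix does not change the column span, and count the orbits in each stratum. For $\dim W=0$ there is only $M=0$, one orbit. For $\dim W=1$ the matrix has rank one, so $M=w\rho$ for a generator $w$ of the line $W$ and a nonzero row vector $\rho\in\mathbb{F}_p^2$, the pair $(w,\rho)$ being unique up to $(w,\rho)\mapsto(\lambda w,\lambda^{-1}\rho)$; hence the only residual invariant is the class of $\rho$ in $\mathbb{P}^1(\mathbb{F}_p)$, and because $\op{SL}(2,\mathbb{F}_p)$ acts transitively on $\mathbb{P}^1(\mathbb{F}_p)$ through $\rho\mapsto\rho\gamma$, all rank-one matrices with a fixed column line form a single orbit; so the rank-one orbits biject with the lines of $\mathbb{F}_p^n$, and there are $(p^n-1)/(p-1)$ of them. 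For $\dim W=2$ (possible only when $n\ge 2$) the map $M\colon\mathbb{F}_p^2\to W$ is an isomorphism; any two such isomorphisms differ by an element of $\op{GL}(2,\mathbb{F}_p)$, and two are $\op{SL}(2,\mathbb{F}_p)$-equivalent precisely when that element has determinant one, so each $2$-plane $W$ contributes $|\mathbb{F}_p^\times|=p-1$ orbits; since the number of $2$-planes in $\mathbb{F}_p^n$ is the Gaussian binomial $\binom{n}{2}_p=\frac{(p^n-1)(p^{n-1}-1)}{(p^2-1)(p-1)}$, this stratum has $\frac{(p^n-1)(p^{n-1}-1)}{p^2-1}$ orbits.

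Summing the three contributions gives
\[
r_1(\ZZ_p^n)=1+\frac{p^n-1}{p-1}+\frac{(p^n-1)(p^{n-1}-1)}{p^2-1},
\]
and after clearing denominators over $p^2-1$ the numerator collapses to $p^{2n-1}+p^{n+1}-p^{n-1}+p^2-p-1$, the asserted value. As a consistency check, at $n=1$ the formula returns $2=\tau(p)=r_1(\ZZ_p)$ in agreement with Theorem~\ref{teo1}, and it reproduces the table entries for $\ZZ_2^2$, $\ZZ_3^2$, $\ZZ_5^2$, $\ZZ_2^3$ and $\ZZ_3^3$.

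The step I expect to be the real obstacle is the rank-one stratum: one must check carefully that passing to the $\op{SL}(2,\mathbb{F}_p)$-orbit genuinely forgets the choice of generator of the column line — that is, one has to untangle correctly the interaction between the left scalar rescaling of $w$ and the right $\op{SL}_2$-action on $\rho$ — so that the count comes out as the number of lines $(p^n-1)/(p-1)$ rather than something larger. The rank-$0$ and rank-$2$ analyses, the surjectivity of reduction mod $p$ for $\op{SL}(2,\ZZ)$, and the closing arithmetic simplification are all routine.
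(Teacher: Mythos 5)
Your proof is correct, and it takes a genuinely different route from the paper's. The paper computes $r_1(\ZZ_p^n)$ by telescoping the increments $F(n)=r_p^{n+1}-r_p^n$, asserting the closed form $F(n)=p^{n-1}(p^n+p-1)$ via the recursion $F(n)=pF(n-1)+p^{2n-2}(p-1)$; that recursion is stated but not derived, so the paper's argument is really a sketch of a counting-by-induction. You instead identify a pair in the elementary abelian group with an $n\times 2$ matrix over $\mathbb{F}_p$, observe that the two generating identifications are right multiplication by the standard generators of $\op{SL}(2,\ZZ)$, pass to $\op{SL}(2,\mathbb{F}_p)$ by surjectivity of reduction mod $p$, and count orbits stratum by stratum according to the column space: one orbit in rank $0$; one orbit per line, i.e.\ $(p^n-1)/(p-1)$ of them, in rank $1$, because $\op{SL}(2,\mathbb{F}_p)$ is transitive on nonzero row vectors; and $p-1$ orbits per $2$-plane in rank $2$, because $\op{GL}(2,\mathbb{F}_p)/\op{SL}(2,\mathbb{F}_p)\cong\mathbb{F}_p^\times$. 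The arithmetic $1+\tfrac{p^n-1}{p-1}+\tfrac{(p^n-1)(p^{n-1}-1)}{p^2-1}=\tfrac{p^{2n-1}+p^{n+1}-p^{n-1}+p^2-p-1}{p^2-1}$ checks out, and the values agree with the table entries for $\ZZ_2^2,\ZZ_3^2,\ZZ_5^2,\ZZ_2^3,\ZZ_3^3,\ZZ_2^4$. What your approach buys is a complete, self-contained argument (no unproved recursion) together with a structural interpretation: the rank-$1$ orbits are exactly the lines of $\mathbb{F}_p^n$ (the nontrivial cyclic subgroups), while each $2$-dimensional subgroup is counted $p-1$ times, which explains precisely how and when $r_1$ deviates from the number of subgroups. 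The rank-one subtlety you flag is genuine but resolves as you suspect: transitivity of $\op{SL}(2,\mathbb{F}_p)$ on $\mathbb{F}_p^2\setminus\{0\}$ already absorbs the scalar ambiguity $(w,\rho)\mapsto(\lambda w,\lambda^{-1}\rho)$, so each column line yields a single orbit.
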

\begin{proof}
For $r_p^n:=r(\ZZ_p^n)$, let $F(n)$ be the number $r_p^{n+1}-r_p^n$. We will prove that
\ben\label{from1} F(n)=p^{n-1}(p^n+p-1)\,.\een Since
$r_p^{n}=(r_p^{n}-r_p^{n-1})+(r_p^{n-1}-r_p^{n-2})+...+(r_p^3-r_p^2)+(r_p^2-r_p^1)+r_p^1$,
where $r_p^1=2$ by Theo\-rem \ref{teo1}. Thus
$r_p^{n}=p^{n-2}(p^{n-1}+p-1)+p^{n-3}(p^{n-2}+p-1)+...+p(p^2+p-1)+(p+p-1)+2$
and as a consequence we have the following equations
\begin{align*}
r_p^{n}&=\sum_{i=0}^{n-2}p^{2i+1}+(p-1)\sum_{i=0}^{n-2}p^i+2\\
&=
p\frac{(p^2)^{n-1}-1}{p^2-1}+(p-1)\frac{p^{n-1}-1}{p-1}+2\\
&=\frac{p^{2n-1}-p+(p^{n-1}-1)(p^2-1)}{p^2-1}\\
&=\frac{p^{2n-1}+p^{n+1}-p^{n-1}+p^2-p-1}{p^2-1}\,.
\end{align*}

The formula \eqref{from1} follows by induction, applying the following identity
\ben\label{restante} F(n)=pF(n-1)+p^{2n-2}(p-1)\,.\een 
where $F(0)=r^1_p-r^0_p=2-1=1$. 
\end{proof}

\section{Cobordism methods}
\label{apen}

The invariant $r(G)$ associated to a finite group $G$, was defined in \cite{carla,carla2}, using cobordism methods. This number which was defined in a combinatorial way in section \ref{sec1}, 
represents the rank of the fundamental group of the 1+1 dimensional $G$-cobordism category. Although this complicated definition, this number describes the number of generators of the following monoid construction: consider all the principal $G$-bundles over connected, closed surfaces. Every $G$-bundle is given by a representation $\gamma$ of the fundamental group of $\Sigma_n$ consisting of elements (called monodromy) $g_i,k_i\in G$  ($i=1,\cdots,n$) satisfying $\prod_{i}[k_i,g_i]=1$. 
Identify two $G$-bundles if they are related by a diffeomorphism of the base which admits a lifting to the total space (this is called a $G$-equivariant diffeomorphism). 
Two $G$-bundles over the surfaces $\Sigma_{n_1}$ and $\Sigma_{n_2}$ respectively, admits a connected sum by considering an interior disk over each surface where both $G$-bundles are trivial, then choose a diffeomorphism between the boundaries of the disks and glue the bases spaces creating a surface $\Sigma_{n_1+n_2}$ with an induced $G$-bundle. This $G$-bundle is principal and moreover the class up to $G$-equivariant diffeomorphism is well-defined. The monoid conformed by these classes of principal $G$-bundles over the surfaces $\Sigma_n$ (with $n\geq 0$), with composition given by the connected sum, is abelian, without torsion and finitely generated (for a proof see \cite{carla2}). Consequently the number $r(G)$ writes as a sum $r_1(G)+r_2(G)+\cdots$, where for every summand $r_i(G)$, the subscript $i$ represents the genus of the generator.

The calculus of these numbers $r_i(G)$ for specific finite groups, uses some equations motivated by the axioms a $G$-Frobenius algebra \cite{turaev,kaufmann,a10}. 
We represent by the pair $(g,k)$ the monodromy of the two generators for a genus one surface, similarly, $(g_1,k_1,g_2,k_2)$ represent the consecutive pair of monodromy of a two genus  surface and hence, the sequence $(g_1,k_1,g_2,k_2,\cdots,g_n,k_n)$ represents the pairs of monodromies of a $n$-genus surface. 
Thus a method to find $r_i(G)$ consider sequence of pairs $(g_1,k_1,g_2,k_2,\cdots,g_i,k_i)$ (with $\prod_{i}[k_i,g_i]=1$), which are not written as a subsequent concatenation of sequences with trivial commutator product and then we apply the identification up to $G$-equivariant diffeomorphism. The use of 
Morse and Cerf theory \cite{Mil1,a11}, implies that every identification up to $G$-equivariant diffeomorphism reduces to some equations for genus one and two which was introduced in section \ref{sec1}. These equations depends on the application of the Dehn twist \cite{Benson}, the interchange of the two generators in a handle, see Figure \ref{f1}, and the clockwise and counterclockwise interchange of two adjacent genus, see Figure \ref{f2}.

\ben\label{f1}
\includegraphics[scale=1.1]{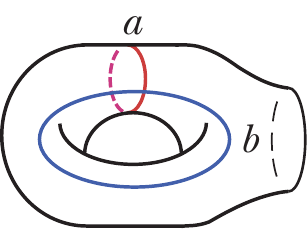}
\een

\ben\label{f2}
\includegraphics{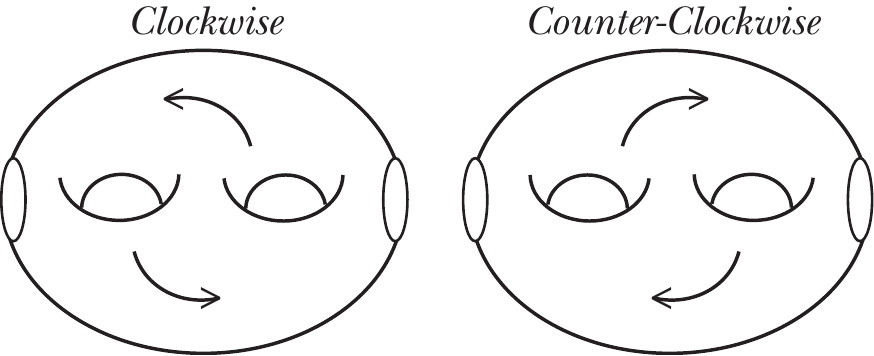}
\een
\newpage
\bibliographystyle{amsalpha}
\bibliography{biblio}

\end{document}